\documentclass[10pt]{article}
\usepackage[utf8]{inputenc}
\usepackage{amsmath, amssymb, amsthm}
\usepackage[margin=1.25in]{geometry}
\usepackage{verbatim}
\usepackage[dvips]{graphicx}
\usepackage{pstricks}
\usepackage{breqn}

\begin{document}

\title{Generalizations of Triangle Inequalities to Spherical and Hyperbolic Geometry}
\author{Karina Cho and Jacob Naranjo}
\date{}

\newcommand{\ssf}[1]{\sin\frac{#1}{2}}
\newcommand{\hsf}[1]{\sinh\frac{#1}{2}}
\newcommand{\Jbar}{\bar J}

\newtheorem{theorem}{Theorem}[section]
\newtheorem{corollary}[theorem]{Corollary}
\newtheorem{lemma}[theorem]{Lemma}
\newtheorem{proposition}{Proposition}[section]
\newtheorem*{remark}{Remark}

\maketitle

\begin{abstract}
    Certain triangle inequalities involving the circumradius, inradius, and side lengths of a triangle are generalized to spherical and hyperbolic geometry. Examples include strengthenings of Euler's inequality, $R\geq2r$. An extension of Euler's inequality to a simplex in $n$-dimensional space is also generalized to spherical geometry.
\end{abstract}

\section{Introduction}

The Euclidean plane, characterized by constant Gaussian curvature $K=0$, gives rise to a multitude of triangle relations. For example, Euler's inequality relates the circumradius $R$ and inradius $r$ of a triangle as
\[ R\geq 2r. \]
We define spherical geometry by a surface of constant curvature $K=1$ and hyperbolic geometry by curvature $K=-1$. In \cite{svrtan_veljan}, Svrtan and Veljan generalize Euler's inequality as
\[
\begin{array}{cl}
R\geq 2r & \text{in Euclidean geometry}\\
\tan R\geq 2\tan r & \text{in spherical geometry}\\
\tanh R\geq 2\tanh r & \text{in hyperbolic geometry}
\end{array}
\]
maintaining the property that equality is achieved if and only if the triangle is equilateral. Our objective is to generalize other Euclidean relations in a similar manner.

Spherical geometry is modeled by the unit sphere in $\mathbb{R}^3$. For hyperbolic geometry we use the Klein disk model, where each point on the hyperbolic plane is identified with a point on the open unit disk. Unlike the Poincar\'e disk model, the Klein model is not conformal; instead, its advantage is that hyperbolic lines are straight lines. Given two points $p$ and $q$, we can find their hyperbolic distance $d(p,q)$ as follows: draw the line through $p$ and $q$ and label its points of intersection with the boundary of the disk as $a$ and $b$, so that the line contains $a$, $p$, $q$, and $b$ in that order. Then
\[
d(p,q)=\frac{1}{2}\ln\frac{|aq|\,|pb|}{|ap|\,|qb|}
\]
where vertical bars indicate Euclidean distances. This formula can be used to show that if a point is a Euclidean distance $r$ from the origin, its hyperbolic distance from the origin is
\[
d(r)=\frac{1}{2}\ln\frac{1+r}{1-r}=\tanh^{-1} r.
\]

\begin{figure}
	\begin{center}
		\includegraphics[width=0.85\columnwidth]{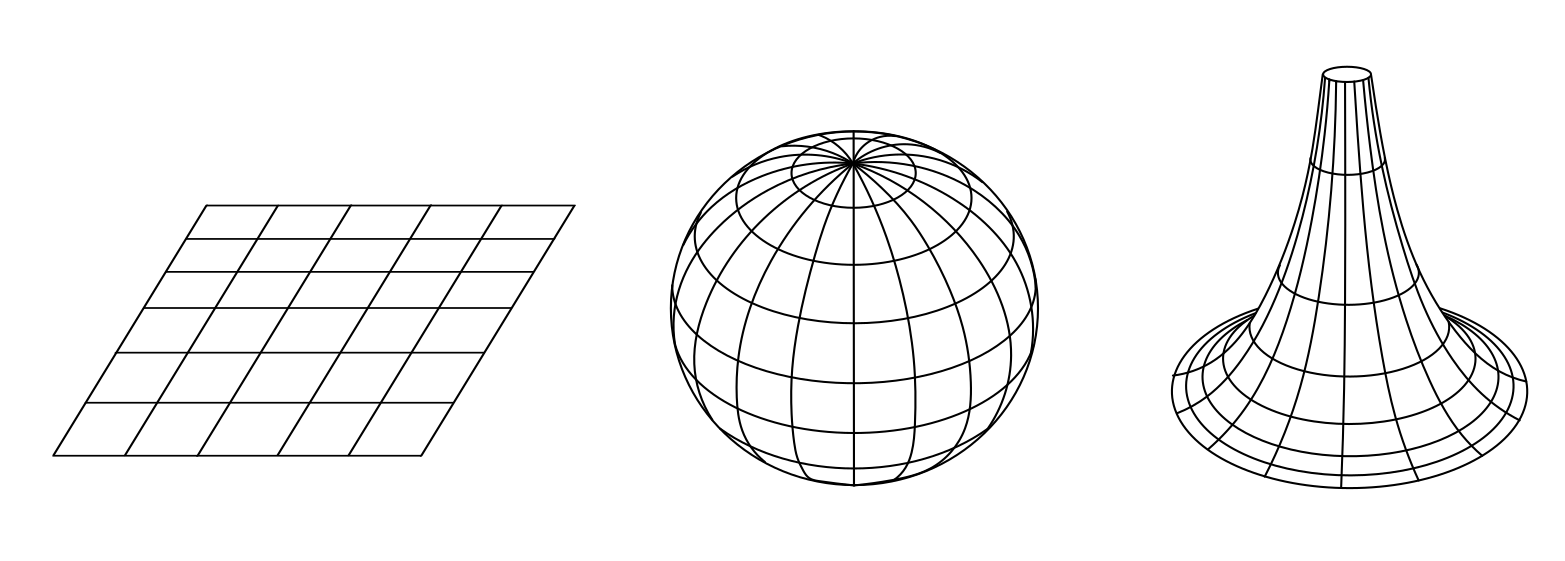}
		\caption{Surfaces of constant Gaussian curvature $K=0$, $K=1$, and $K=-1$, from left to right.}
	\end{center}
\end{figure}

Our generalizations will make use of the following unifying function, also used in \cite{reu2016} and \cite{guo}:
\begin{equation}\label{func:s}
	s(x):=\left\{
	\begin{array}{ll}
		\frac{x}{2} & \text{in Euclidean geometry}\\
		{\sin \frac{x}{2}} & \text{in spherical geometry}\\
		{\sinh \frac{x}{2}} & \text{in hyperbolic geometry}
	\end{array}
	\right.
\end{equation}
The following lemma illustrates a fundamental property of the $s$-function.

\begin{lemma}\label{lemma:exist}
	In Euclidean, spherical, or hyperbolic geometry, if there exists a triangle with side lengths $a$, $b$, and $c$, then there exists a Euclidean triangle with side lengths $s(a), s(b)$, and $s(c)$.
\end{lemma}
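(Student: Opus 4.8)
The plan is to use the standard fact that three nonnegative numbers are the side lengths of a (possibly degenerate) Euclidean triangle exactly when they satisfy the triangle inequality, so the lemma reduces to checking that $s(a)$, $s(b)$, $s(c)$ do. Since $s$ is strictly increasing on the admissible range of side lengths in each geometry, I would relabel so that $c$ is the largest side; then $s(c)$ is the largest of the three values and the only inequality in question is $s(c) \le s(a) + s(b)$. In the Euclidean case $s(x) = x/2$ this is simply $c \le a+b$ divided by two, so that case is immediate.

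For the spherical case I would argue geometrically rather than analytically. Model the triangle on the unit sphere in $\mathbb{R}^3$ with vertices $A$, $B$, $C$; two points at spherical (arc) distance $x$ are joined by a straight chord of Euclidean length $2\sin\frac{x}{2} = 2s(x)$. Hence $A$, $B$, $C$, regarded merely as three points of $\mathbb{R}^3$, have pairwise Euclidean distances $2s(a)$, $2s(b)$, $2s(c)$. Three points of Euclidean space automatically satisfy the triangle inequality for their mutual distances, so after dividing by $2$ the numbers $s(a)$, $s(b)$, $s(c)$ form a Euclidean triangle; moreover degeneracy occurs exactly when $A$, $B$, $C$ are collinear in $\mathbb{R}^3$, which also identifies the equality case.

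The hyperbolic case is where I expect the genuine difficulty, and I would single it out as the main obstacle. The analytic goal is the same, namely to pass from $c \le a+b$ to $\hsf{c} \le \hsf{a} + \hsf{b}$, but the tidy chord argument does not carry over: in the hyperboloid model the natural chord between two vertices is measured by $\sinh\frac{x}{2}$ inside an ambient Minkowski form of indefinite signature, so it does not directly produce a Euclidean triangle. This forces a direct analytic attack, and the naive attempt misfires — expanding $\sinh\frac{a+b}{2} = \hsf{a}\cosh\frac{b}{2} + \cosh\frac{a}{2}\hsf{b}$ and using $\cosh \ge 1$ gives $\sinh\frac{a+b}{2} \ge \hsf{a} + \hsf{b}$, the opposite of what is wanted. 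Consequently monotonicity together with the crude addition formula is not enough, and the argument must bring in more of the triangle's data, for instance by feeding the hyperbolic law of cosines $\cosh c = \cosh a \cosh b - \sinh a \sinh b \cos C$ into the half-angle identities in order to control $\hsf{c}$ through the actual angle $C$. Making this last reduction deliver the desired inequality — and pinning down precisely the configurations for which it holds — is the crux, and I would expect it to demand the most care.
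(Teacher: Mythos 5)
Your Euclidean case matches the paper, and your spherical argument is correct --- in fact the chord trick is cleaner than what the paper does, which is to place the circumcenter at the north pole and apply the central (gnomonic) projection onto the tangent plane $z=1$, obtaining a Euclidean triangle with side lengths $\frac{2}{\cos R}\ssf{a}$, $\frac{2}{\cos R}\ssf{b}$, $\frac{2}{\cos R}\ssf{c}$; your version needs no normalization at all, and nondegeneracy is automatic since three distinct points of a sphere are never collinear in $\mathbb{R}^3$ (a line meets the sphere in at most two points). The genuine gap is the hyperbolic case, which you explicitly leave open: identifying it as the crux and gesturing at the law of cosines is not a proof, and in fact the purely analytic route you sketch cannot succeed, because the target inequality $\hsf{c}\leq\hsf{a}+\hsf{b}$ is \emph{false} for some hyperbolic triangles. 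Take $a=b$ and let the included angle $C$ tend to $\pi$: then $c\to 2a$ and $\hsf{c}\to\sinh a=2\hsf{a}\cosh\frac{a}{2}>\hsf{a}+\hsf{b}$, so by continuity sufficiently flat nondegenerate triangles violate it (concretely, $a=b=1$ and $\cos C=-0.999$ give $\sinh\frac{c}{2}\approx 1.175>1.042\approx 2\sinh\frac{1}{2}$). So no amount of care with half-angle identities will extract the inequality from the triangle data alone.

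The idea you were missing is the Klein disk model, which is precisely the hyperbolic analogue of your chord argument: hyperbolic geodesics in the Klein model are Euclidean straight lines, so the three vertices of a hyperbolic triangle span an honest Euclidean triangle, and when all three vertices lie at hyperbolic distance $R$ from the origin the Euclidean side lengths come out to $\frac{2}{\cosh R}\hsf{a}$, $\frac{2}{\cosh R}\hsf{b}$, $\frac{2}{\cosh R}\hsf{c}$, which after rescaling gives the lemma. Note, however, that the paper's ``without loss of generality'' --- placing all vertices equidistant from the origin --- silently assumes the triangle has a circumcenter, which is automatic in spherical geometry but not in hyperbolic geometry: the flat triangles above are inscribed in a hypercycle or horocycle rather than a circle, and these are exactly the triangles for which the conclusion fails. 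Within the paper this is harmless, since the lemma is only ever applied to triangles carrying a circumradius $R$, but it means your instinct was doubly right: the hyperbolic case is the hard one, and monotonicity plus $c\leq a+b$ genuinely cannot close it. As a blind proposal, though, yours establishes only two of the three geometries.
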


\begin{proof}
	For Euclidean geometry, the statement is trivial: if $a$, $b$, and $c$ satisfy the triangle inequality then the same is true for $a/2$, $b/2$, and $c/2$.
	
	For spherical geometry, consider a triangle with side lengths $a$, $b$, and $c$ on the unit sphere in $\mathbb{R}^3$. Without loss of generality we may assume all three vertices are at a spherical distance $R$ from the north pole $(0,0,1)$. By taking the projection of the triangle onto the tangent plane $z=1$ we obtain a Euclidean triangle with side lengths $2\ssf{a}/\cos R$, $2\ssf{b}/\cos R$, and $2\ssf{c}/\cos R$, or $k\cdot s(a)$, $k\cdot s(b)$, and $k\cdot s(c)$ with $k=2/\cos R$. Thus there exists a Euclidean triangle with side lengths $s(a)$, $s(b)$, and $s(c)$.
	
	For hyperbolic geometry, consider a triangle in the Klein disk model with side lengths $a$, $b$, and $c$. Without loss of generality we may assume all three vertices are at a hyperbolic distance $R$ from the origin. The Euclidean triangle determined by the same points has side lengths $k\cdot s(a)$, $k\cdot s(b)$, and $k\cdot s(c)$ with $k=2/\cosh R$. Thus there exists a Euclidean triangle with side lengths $s(a)$, $s(b)$, and $s(c)$.
\end{proof}

For triangle relations which involve only the side lengths, the $s$-function produces an immediate generalization to all three geometries.

\begin{lemma}[proved in \cite{reu2016}]
	\label{lemma:s}
	If $f(a,b,c)\geq 0$ holds for all triangles with side lengths $a,b,c$ in Euclidean geometry, then $f(s(a),s(b),s(c))\geq 0$ holds for all triangles with side lengths $a,b,c$ in all three geometries.
\end{lemma}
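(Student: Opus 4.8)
The plan is to reduce the statement directly to Lemma~\ref{lemma:exist}. The hypothesis is that $f(a,b,c)\geq 0$ holds for \emph{every} triple $(a,b,c)$ that forms a Euclidean triangle. The goal is to show $f(s(a),s(b),s(c))\geq 0$ whenever $(a,b,c)$ forms a triangle in any one of the three geometries. The key observation is that these two statements are connected precisely by Lemma~\ref{lemma:exist}, which guarantees that $s(a)$, $s(b)$, $s(c)$ are themselves the side lengths of a genuine Euclidean triangle.

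First I would fix a geometry (Euclidean, spherical, or hyperbolic) and a triangle in that geometry with side lengths $a$, $b$, $c$. By Lemma~\ref{lemma:exist}, there exists a Euclidean triangle whose side lengths are exactly $s(a)$, $s(b)$, and $s(c)$; in other words, the triple $(s(a),s(b),s(c))$ satisfies the Euclidean triangle inequality and consists of positive reals. Next I would apply the hypothesis to this Euclidean triangle: since $f(a',b',c')\geq 0$ for \emph{all} side lengths $a',b',c'$ of Euclidean triangles, and $(s(a),s(b),s(c))$ is such a triple, we conclude $f(s(a),s(b),s(c))\geq 0$. Because the geometry was arbitrary among the three, the inequality holds in all three cases, which is exactly the claim.

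The proof is essentially a one-line composition of quantifiers, so there is no computational obstacle; the only thing to be careful about is the logical bookkeeping. The subtle point worth stating explicitly is that the hypothesis must be read with a universal quantifier over all admissible Euclidean side-length triples, so that it can be instantiated at the particular triple $(s(a),s(b),s(c))$ produced by Lemma~\ref{lemma:exist}. One should also confirm that $s$ maps each genuine side length to a positive value and that the triple indeed satisfies the (possibly strict) triangle inequality needed for it to count as the side lengths of a Euclidean triangle, but both of these are furnished by Lemma~\ref{lemma:exist}, so no independent verification is required here.
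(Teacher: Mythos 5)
Your proposal is correct and follows exactly the paper's own argument: apply Lemma~\ref{lemma:exist} to obtain a Euclidean triangle with side lengths $s(a)$, $s(b)$, $s(c)$, then instantiate the hypothesis at that triple. The extra remarks about quantifier bookkeeping are sound but add nothing beyond what the paper's two-line proof already contains.
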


\begin{proof}
	Suppose a triangle in Euclidean, spherical, or hyperbolic geometry has side lengths $a,b,c$. By Lemma \ref{lemma:exist}, there exists a triangle in Euclidean geometry with side lengths $s(a),s(b),s(c)$. The lemma condition then implies $f(s(a),s(b),s(c))\geq 0$.
\end{proof}


The $s$-function also gives rise to unified formulas for the circumradius and inradius in terms of the side lengths $a,b,c$:
\begin{equation}\label{form:R}
	\frac{2s(a)s(b)s(c)}{\sqrt{s(a+b-c)s(a+c-b)s(b+c-a)s(a+b+c)}}= \left\{
	\begin{array}{ll}
		R & \quad\text{(Euclidean)}\\
		\tan R & \quad\text{(spherical)}\\
		\tanh R & \quad\text{(hyperbolic)}
	\end{array}
	\right.
\end{equation}
\begin{equation}\label{form:r}
	\sqrt{\frac{s(a+b-c)s(a+c-b)s(b+c-a)}{s(a+b+c)}}= \left\{
	\begin{array}{ll}
		r & \quad\text{(Euclidean)}\\
		\tan r & \quad\text{(spherical)}\\
		\tanh r & \quad\text{(hyperbolic)}
	\end{array}
	\right.
\end{equation}

\section{Strengthenings of Euler's Inequality}
In \cite{svrtan_wu}, Svrtan and Wu give a strengthened form of Euler's inequality in Euclidean geometry:
\begin{equation}\label{ineq:strong0}
	\frac{R}{r}\geq\frac{abc+a^3+b^3+c^3}{2abc}\geq\frac{a}{b}+\frac{b}{c}+\frac{c}{a}-1\geq\frac{2}{3}\left(\frac{a}{b}+\frac{b}{c}+\frac{c}{a}\right)\geq2.
\end{equation}
In \cite{reu2016}, Black and Smith prove that (\ref{ineq:strong0}) generalizes to spherical geometry as 
\begin{align*}
	\frac{\tan R}{\tan r}&\geq\frac{s(a)s(b)s(c)+s(a)^3+s(b)^3+s(c)^3}{2s(a)s(b)s(c)}\\&\geq\frac{s(a)}{s(b)}+\frac{s(b)}{s(c)}+\frac{s(c)}{s(a)}-1\\&\geq\frac{2}{3}\left(\frac{s(a)}{s(b)}+\frac{s(b)}{s(c)}+\frac{s(c)}{s(a)}\right)\\&\geq2.
\end{align*}
Moreover, they demonstrate that the same generalization does not work for hyperbolic geometry. They also provide the following unified strengthening of Euler's inequality for all three geometries.
\begin{equation}
	2\leq\frac{2s\left(\frac{a+b}{2}\right)s\left(\frac{b+c}{2}\right)s\left(\frac{a+c}{2}\right)}{s(a)s(b)s(c)}\leq\left\{
	\begin{array}{ll}
		R/r & \quad\text{(Euclidean)}\\
		\tan R/\tan r & \quad\text{(spherical)}\\
		\tanh R/\tanh r & \quad\text{(hyperbolic)}
	\end{array}
	\right.
\end{equation}

The strengthenings of Euler's inequality listed below are provided in \cite{adv}. In the section to follow, these inequalities are generalized to spherical or hyperbolic geometry.

\begin{gather}
	\frac{R}{2r}\geq\frac{(a+b+c)(a^3+b^3+c^3)}{(ab+bc+ca)^2}\geq 1\label{ineq:1}\\
	2R^2+r^2\geq\frac{1}{4}(a^2+b^2+c^2)\geq 3r(2R-r)\label{ineq:2}\\
	\frac{1}{4r^2}\geq\frac{1}{a^2}+\frac{1}{b^2}+\frac{1}{c^2}\geq\frac{1}{3}\left(\frac{1}{a}+\frac{1}{b}+\frac{1}{c}\right)^2\geq\frac{1}{2rR}\label{ineq:3}
\end{gather}

\section{Generalization Theorem}

\subsection{Preliminaries}

Our primary result is Theorem \ref{thm:main}, which generates a simple analogue in either spherical or hyperbolic geometry for triangle inequalities relating a function of $R$ and $r$ to a function of $a$, $b$, and $c$. For a triangle in Euclidean, spherical, or hyperbolic geometry with side lengths $a,b,c$ let us define the following quantities:
\begin{align*}
	J&:=\sqrt{s(a+b-c)s(a+c-b)s(b+c-a)},\\
	\Jbar&:=\sqrt{(s(a)+s(b)-s(c))(s(a)+s(c)-s(b))(s(b)+s(c)-s(a))}.
\end{align*}
Note that each quantity under the square root is positive by the triangle inequality. We can rewrite the formulas for $R$ and $r$ as follows.

\begin{align}
	\label{form:RJ}
	\frac{2s(a)s(b)s(c)}{J\cdot\sqrt{s(a+b+c)}} &= \left\{
	\begin{array}{ll}
		R & \quad\text{(Euclidean)}\\
		\tan R & \quad\text{(spherical)}\\
		\tanh R & \quad\text{(hyperbolic)}
	\end{array}
	\right.\\
	\label{form:rJ}
	\frac{J}{\sqrt{s(a+b+c)}} &=
	\left\{
	\begin{array}{ll}
		r & \quad\text{(Euclidean)}\\
		\tan r & \quad\text{(spherical)}\\
		\tanh r & \quad\text{(hyperbolic)}
	\end{array}
	\right.
\end{align}

The following relation between $J$ and $\Jbar$ was proved in \cite{reu2016}.

\begin{lemma}\label{lemma:JJ}
	In spherical geometry,
	\[
	J\leq\Jbar\leq\sqrt{s(a)s(b)s(c)}.
	\]
	In hyperbolic geometry,
	\[
	\Jbar\leq J\leq\sqrt{s(a)s(b)s(c)}.
	\]
	Moreover, in both geometries, $J=\Jbar$ if and only if $a=b=c$.
\end{lemma}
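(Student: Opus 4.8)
The plan is to set $\alpha = a/2$, $\beta = b/2$, $\gamma = c/2$, so that in spherical geometry $s(a)=\sin\alpha$ and $s(a+b-c)=\sin(\alpha+\beta-\gamma)$ (with $\sinh$ in place of $\sin$ in the hyperbolic case), and then to treat the two displayed inequalities separately. Write $J^2 = A\,B\,C$ and $\Jbar^2 = \bar A\,\bar B\,\bar C$, where $A = s(a+b-c)$, $\bar A = s(a)+s(b)-s(c)$, and $B,C,\bar B,\bar C$ are the cyclic analogues. All six factors are positive: the barred ones because $s(a),s(b),s(c)$ satisfy the triangle inequality (Lemma \ref{lemma:exist}), and $A,B,C$ because each argument such as $\alpha+\beta-\gamma$ lies in $(0,\pi)$ in the spherical case (the perimeter gives $\alpha+\beta+\gamma<\pi$) and in $(0,\infty)$ in the hyperbolic case. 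This positivity is what lets me multiply inequalities and take square roots freely.

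For the comparison of $J$ and $\Jbar$ I would work with pairwise products. Using $\sin(u+v)\sin(u-v) = \sin^2 u - \sin^2 v$ (resp. $\sinh(u+v)\sinh(u-v)=\sinh^2 u - \sinh^2 v$) with $u=\alpha$ and $v=\beta-\gamma$, I get $A\,B = \sin^2\alpha - \sin^2(\beta-\gamma)$ in the spherical case (resp. $\sinh^2\alpha - \sinh^2(\beta-\gamma)$ in the hyperbolic case), whereas expanding directly gives $\bar A\,\bar B = \sin^2\alpha - (\sin\beta-\sin\gamma)^2$ (resp. $\sinh^2\alpha - (\sinh\beta-\sinh\gamma)^2$). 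Everything then reduces to comparing $\sin(\beta-\gamma)$ with $\sin\beta-\sin\gamma$. Applying sum-to-product, $\sin\beta-\sin\gamma = 2\cos\frac{\beta+\gamma}{2}\sin\frac{\beta-\gamma}{2}$ and $\sin(\beta-\gamma) = 2\cos\frac{\beta-\gamma}{2}\sin\frac{\beta-\gamma}{2}$; since $0\le \frac{|\beta-\gamma|}{2} < \frac{\beta+\gamma}{2} < \frac{\pi}{2}$ and cosine is positive and decreasing there, $\sin^2(\beta-\gamma) \ge (\sin\beta-\sin\gamma)^2$, hence $\bar A\,\bar B \ge A\,B$. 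The hyperbolic identities $\sinh\beta-\sinh\gamma = 2\cosh\frac{\beta+\gamma}{2}\sinh\frac{\beta-\gamma}{2}$ and $\sinh(\beta-\gamma)=2\cosh\frac{\beta-\gamma}{2}\sinh\frac{\beta-\gamma}{2}$ reverse this, because $\cosh$ is increasing, giving $(\sinh\beta-\sinh\gamma)^2 \ge \sinh^2(\beta-\gamma)$ and hence $A\,B \ge \bar A\,\bar B$. Taking the three cyclic versions and multiplying, I obtain $\Jbar^4 \ge J^4$ in the spherical case and $J^4 \ge \Jbar^4$ in the hyperbolic case; a square root gives the first inequality in each geometry. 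Equality throughout forces $\beta=\gamma$, $\gamma=\alpha$, and $\alpha=\beta$ simultaneously, i.e. $a=b=c$, which settles the equality claim.

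For the upper bound by $\sqrt{s(a)s(b)s(c)}$ I would argue differently in the two geometries. The bound on $\Jbar$ holds in both and follows from the classical inequality $(p+q-t)(p+t-q)(q+t-p)\le pqt$ for the Euclidean triangle with sides $p=s(a)$, $q=s(b)$, $t=s(c)$: substituting $x=q+t-p$, $y=p+t-q$, $z=p+q-t$ and applying $y+z\ge 2\sqrt{yz}$ and its cyclic partners gives $xyz\le \tfrac18(x+y)(y+z)(z+x)$, which is exactly $\Jbar^2 \le s(a)s(b)s(c)$. In the spherical case this already yields $J\le \Jbar \le \sqrt{s(a)s(b)s(c)}$. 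In the hyperbolic case I still need $J\le \sqrt{s(a)s(b)s(c)}$ directly, so I would set $x=\alpha+\beta-\gamma$, $y=\alpha-\beta+\gamma$, $z=-\alpha+\beta+\gamma$, so that $J^2=\sinh x\sinh y\sinh z$ and $s(a)s(b)s(c) = \sinh\frac{x+y}{2}\sinh\frac{y+z}{2}\sinh\frac{z+x}{2}$, and use $\sinh^2\frac{x+y}{2}-\sinh x\sinh y = \tfrac12(\cosh(x-y)-1)\ge 0$; multiplying the three cyclic instances and taking square roots gives $J^2 \le s(a)s(b)s(c)$, with equality iff $x=y=z$, i.e. $a=b=c$.

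I expect the main obstacle to be the sign bookkeeping that produces the \emph{opposite} directions in the two geometries: the whole statement hinges on the contrast between $\sin^2(\beta-\gamma)\ge(\sin\beta-\sin\gamma)^2$ and $(\sinh\beta-\sinh\gamma)^2\ge\sinh^2(\beta-\gamma)$, which in turn comes from $\cos$ decreasing versus $\cosh$ increasing. A secondary point requiring care is verifying $\frac{\beta+\gamma}{2} < \frac{\pi}{2}$ in the spherical case (so that $\cos\frac{\beta+\gamma}{2}>0$ and the cosine comparison is valid), which relies on the spherical perimeter bound, together with the observation that the hyperbolic upper bound on $J$ is \emph{not} a consequence of the bound on $\Jbar$ and must be proved on its own.
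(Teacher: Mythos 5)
Your proposal is correct, and it draws on the same essential toolbox as the paper---the sum-to-product identity, the identity $\sin(u+v)\sin(u-v)=\sin^2u-\sin^2v$ and its hyperbolic counterpart, the contrast between $\cos$ decreasing and $\cosh$ increasing, the Ravi substitution plus AM--GM for $\Jbar^2\leq s(a)s(b)s(c)$, and a separate direct proof of $J^2\leq s(a)s(b)s(c)$ in the hyperbolic case---but your decomposition of the $J$ versus $\Jbar$ comparison is genuinely different. The paper assumes $a\geq b\geq c$ without loss of generality and splits $J^2\leq\Jbar^2$ into an asymmetric product of the single-factor inequality \eqref{p1} and the two-factor inequality \eqref{p2}; you instead compare all three pairwise products $AB\leq\bar A\bar B$, $BC\leq\bar B\bar C$, $CA\leq\bar C\bar A$ symmetrically and extract $J\leq\Jbar$ from $J^4\leq\Jbar^4$. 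This buys you three things: no WLOG ordering is needed (each pairwise comparison is the same computation up to relabeling); the hyperbolic direction $\Jbar\leq J$ is proved explicitly rather than by the paper's ``reverse all inequalities and replace sine by hyperbolic sine'' remark; and the equality analysis is symmetric (the three comparisons force $\beta=\gamma$, $\gamma=\alpha$, $\alpha=\beta$), whereas the paper's is asymmetric (equality in \eqref{p1} iff $a=b$, in \eqref{p2} iff $b=c$). The cost is negligible---one extra two-factor comparison and a fourth root. Your hyperbolic upper bound via $\sinh^2\frac{x+y}{2}-\sinh x\sinh y=\frac{1}{2}\left(\cosh(x-y)-1\right)\geq0$ is an equivalent closed-form repackaging of the paper's step: applying AM--GM to $\sinh\frac{x+y}{2}=\sinh\frac{x}{2}\cosh\frac{y}{2}+\sinh\frac{y}{2}\cosh\frac{x}{2}$ and squaring yields exactly your inequality. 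You are also more careful than the paper about range checks: verifying that arguments such as $\alpha+\beta-\gamma$ lie in $(0,\pi)$ and that $\frac{\beta+\gamma}{2}<\frac{\pi}{2}$ via the spherical perimeter bound is genuinely needed for the positivity of the factors and for the validity of the cosine comparison, and the paper leaves these points implicit; your observation that the hyperbolic chain requires a direct bound on $J$ rather than a consequence of the bound on $\Jbar$ matches the paper's structure exactly.
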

\begin{proof}
	First we show $J\leq\Jbar$ in spherical geometry. We omit the proof of $J\geq\Jbar$ in hyperbolic geometry, as it can be obtained simply by reversing the inequalities and replacing sine and cosine with their hyperbolic counterparts. Note that while cosine is decreasing on the interval $[0,\pi/2]$, hyperbolic cosine is increasing on $[0,\infty)$.
	
	To prove $J\leq\Jbar$ in spherical geometry, we assume $a\geq b\geq c$ without loss of generality, then verify the following two statements separately:
	\begin{gather}
		s(b+c-a)\leq s(b)+s(c)-s(a),\label{p1}\\
		s(a+b-c)s(a+c-b)\leq (s(a)+s(b)-s(c))(s(a)+s(c)-s(b))\label{p2}.
	\end{gather}
	Note that $a\geq b$ implies $2a-b-c\geq b-c$, and since cosine is decreasing on $[0,\pi/2]$ we have \begin{equation}\label{id:0}
		\cos\frac{2a-b-c}{4}\leq\cos\frac{b-c}{4}.
	\end{equation}
	We also make use of the identities
	
	\begin{equation}\label{id:1}
		\sin x+\sin y=2\sin\frac{x+y}{2}\cos\frac{x-y}{2}
	\end{equation}
	and
	\begin{equation}\label{id:2}
		\sin(x+y)\sin(x-y)=\sin^2x-\sin^2y
	\end{equation}
	(hyperbolic sine and cosine also satisfy these identities). Now we show (\ref{p1}):
	\begin{align*}
		s(b+c-a)&=\sin\frac{b+c-a}{2}\\
		&=\left(\sin\frac{a}{2}+\sin\frac{b+c-a}{2}\right)-\sin\frac{a}{2}\\
		&=2\sin\frac{b+c}{4}\cos\frac{2a-b-c}{4}-\sin\frac{a}{2}\\
		&\leq2\sin\frac{b+c}{4}\cos\frac{b-c}{4}-\sin\frac{a}{2}\\
		&=\sin\frac{b}{2}+\sin\frac{c}{2}-\sin\frac{a}{2}\\
		&=s(b)+s(c)-s(a).
	\end{align*}
	On the other hand, (\ref{p2}) is equivalent to
	\begin{align*}
		&& \sin\frac{a+b-c}{2}\sin\frac{a+c-b}{2}&\leq \left(\sin\frac{a}{2}+\sin\frac{b}{2}-\sin\frac{c}{2}\right)\left(\sin\frac{a}{2}+\sin\frac{c}{2}-\sin\frac{b}{2}\right)\\
		& \Longleftrightarrow & \sin^2\frac{a}{2}-\sin^2\frac{b-c}{2}&\leq \sin^2\frac{a}{2}-\left(\sin\frac{b}{2}-\sin\frac{c}{2}\right)^2\\
		& \Longleftrightarrow & \left(\sin\frac{b}{2}-\sin\frac{c}{2}\right)^2 &\leq \sin^2\frac{b-c}{2}\\
		& \Longleftrightarrow & \sin\frac{b}{2}-\sin\frac{c}{2} &\leq \sin\frac{b-c}{2}\\
		& \Longleftrightarrow & 2\sin\frac{b-c}{4}\cos\frac{b+c}{4} &\leq 2\sin\frac{b-c}{4}\cos\frac{b-c}{4}
	\end{align*}
	which also follows from the fact that cosine is decreasing on $[0,\pi/2]$. Moreover, observe that there is equality in (\ref{p1}) if and only if $a=b$ and equality in (\ref{p2}) if and only if $b=c$. Thus $J=\Jbar$ if and only if $a=b=c$, as desired.
	
	Next we prove $\Jbar^2\leq s(a)s(b)s(c)$ for spherical geometry. Since $s(a),s(b),s(c)$ satisfy the triangle inequality (by Lemma \ref{lemma:exist}), we can make the substitution $s(a)=x+y$, $s(b)=x+z$, $s(c)=y+z$ which transforms the inequality into
	\[
	(2x)(2y)(2z)\leq(x+y)(x+z)(y+z)
	\]
	or
	\[
	xyz\leq\frac{2xyz+x^2y+x^2z+xy^2+y^2z+xz^2+yz^2}{8}
	\]
	which is a direct application of the arithmetic mean--geometric mean inequality.
	
	Finally, we prove $J^2\leq s(a)s(b)s(c)$ for hyperbolic geometry. With the substitution $a=x+y$, $b=x+z$, $c=y+z$ this is equivalent to 
	\begin{align*}
		&& s(2x)s(2y)s(2z)&\leq s(x+y)s(x+z)s(y+z)\\
		& \Longleftrightarrow & \sinh x\sinh y\sinh z&\leq\hsf{x+y}\hsf{x+z}\hsf{y+z}\\
		& \Longleftrightarrow & 8\hsf{x}\cosh\frac{x}{2}\hsf{y}\cosh\frac{y}{2}\hsf{z}\cosh\frac{z}{2}&\leq\left(\hsf{x}\cosh\frac{y}{2}+\hsf{y}\cosh\frac{x}{2}\right)\cdots
	\end{align*}
	which also follows from the arithmetic mean--geometric mean inequality.
\end{proof}

\subsection{Theorem and Proof}

\begin{theorem}\label{thm:main}
	Let $f(x,y)$ and $g(x,y,z)$ be homogeneous functions of degree $n$. Suppose \[f(R,r)\geq g(a,b,c)\] holds for all Euclidean triangles with side lengths $a,b,c$ with equality if and only if $a=b=c$. Then:
	\begin{itemize}
		\item[(a)] if $f\left(\frac{2M^2}{x},x\right)$ is a decreasing function of $x$ for $0<x\leq M$ then
		\[
		f(\tan R,\tan r)\geq 2^n\cdot g(s(a),s(b),s(c))\cdot\left(\frac{s(a)+s(b)+s(c)}{s(a+b+c)}\right)^{n/2}
		\]
		holds for all spherical triangles with equality if and only if $a=b=c$;
		\item[(b)] if $f\left(\frac{2M^2}{x},x\right)$ is an increasing function of $x$ for $0<x\leq M$ then
		\[
		f(\tanh R,\tanh r)\geq2^n\cdot g(s(a),s(b),s(c))\cdot\left(\frac{s(a)+s(b)+s(c)}{s(a+b+c)}\right)^{n/2}
		\]
		holds for all hyperbolic triangles with equality if and only if $a=b=c$.
	\end{itemize}
\end{theorem}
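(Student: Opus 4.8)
The plan is to reduce the claim to the Euclidean hypothesis applied to an auxiliary triangle. By Lemma~\ref{lemma:exist}, a spherical (resp.\ hyperbolic) triangle with sides $a,b,c$ produces a \emph{Euclidean} triangle with sides $s(a),s(b),s(c)$; write $R'$ and $r'$ for its circumradius and inradius. Applying the Euclidean case of \eqref{form:R}--\eqref{form:r} to this triangle—where the expressions under the radicals become exactly the quantity $\Jbar$—gives $R'=\frac{s(a)s(b)s(c)}{\Jbar\sqrt{s(a)+s(b)+s(c)}}$ and $r'=\frac{\Jbar}{2\sqrt{s(a)+s(b)+s(c)}}$, so the hypothesis yields $f(R',r')\ge g(s(a),s(b),s(c))$ with equality iff $a=b=c$. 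The entire problem is then to compare $f(\tan R,\tan r)$ with $f(R',r')$ using only homogeneity and the monotonicity assumption on $f$.

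The key observation is that, after a single rescaling, both points lie on a common level hyperbola of the product of the two arguments. Writing $\sigma=s(a)+s(b)+s(c)$ and $S=s(a+b+c)$, formulas \eqref{form:RJ}--\eqref{form:rJ} give $\tan R\tan r=\frac{2s(a)s(b)s(c)}{S}$, while the auxiliary triangle satisfies $R'r'=\frac{s(a)s(b)s(c)}{2\sigma}$. Setting $\lambda=2\sqrt{\sigma/S}$ and $M=\sqrt{s(a)s(b)s(c)/S}$, both $(\tan R,\tan r)$ and the rescaled point $(\lambda R',\lambda r')$ satisfy $xy=2M^2$. Explicitly $\tan R=\frac{2s(a)s(b)s(c)}{J\sqrt S}$, $\tan r=\frac{J}{\sqrt S}$, whereas $\lambda R'=\frac{2s(a)s(b)s(c)}{\Jbar\sqrt S}$, $\lambda r'=\frac{\Jbar}{\sqrt S}$: the two points differ only by interchanging $J$ and $\Jbar$ in a way that preserves the product.

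Lemma~\ref{lemma:JJ} now supplies the comparison. In spherical geometry $J\le\Jbar$, hence $\tan r\le\lambda r'$; and $\Jbar\le\sqrt{s(a)s(b)s(c)}$ gives $\lambda r'=\Jbar/\sqrt S\le M$, so both second coordinates lie in $(0,M]$. Since $x\mapsto f(2M^2/x,x)$ is decreasing there, the smaller argument $\tan r$ produces the larger value, $f(\tan R,\tan r)\ge f(\lambda R',\lambda r')$. Homogeneity gives $f(\lambda R',\lambda r')=\lambda^n f(R',r')\ge\lambda^n g(s(a),s(b),s(c))$, and since $\lambda^n=2^n(\sigma/S)^{n/2}$ is precisely the claimed factor, part~(a) follows. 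Part~(b) is identical after reversing inequalities: in hyperbolic geometry $\Jbar\le J$ forces $\lambda r'\le\tanh r\le M$ (using $J\le\sqrt{s(a)s(b)s(c)}$), and an increasing $f(2M^2/x,x)$ again yields $f(\tanh R,\tanh r)\ge f(\lambda R',\lambda r')$.

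For the equality case, the step $\lambda^n f(R',r')\ge\lambda^n g$ is strict unless the auxiliary triangle is equilateral, i.e.\ unless $a=b=c$ (as $s$ is injective), so equality throughout forces $a=b=c$; conversely, when $a=b=c$ we have $J=\Jbar$, the two points coincide, and both steps are equalities. The step I expect to be most delicate is verifying the domain condition $0<x\le M$ for both second coordinates: this is exactly where the bound $\Jbar\le\sqrt{s(a)s(b)s(c)}$ (resp.\ $J\le\sqrt{s(a)s(b)s(c)}$) from Lemma~\ref{lemma:JJ} is essential, and it is what pins the equilateral configuration at the endpoint $x=M$ where the monotone function is extremized.
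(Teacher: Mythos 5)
Your proof is correct and follows essentially the same route as the paper: the auxiliary Euclidean triangle from Lemma \ref{lemma:exist}, the comparison of $J$ and $\Jbar$ together with the bound $\sqrt{s(a)s(b)s(c)}$ from Lemma \ref{lemma:JJ}, and the monotonicity of $f\left(\frac{2M^2}{x},x\right)$ along the common hyperbola $xy=2M^2$. Your rescaling by $\lambda=2\sqrt{\sigma/S}$ (with $M=\sqrt{s(a)s(b)s(c)/S}$ in place of the paper's $M=\sqrt{s(a)s(b)s(c)}$, harmless since homogeneity of $f$ makes the monotonicity condition scale-invariant) is just a repackaging of the paper's homogeneity bookkeeping, and your equality analysis matches as well.
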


\begin{proof}
	We only include the proof of part (a), as we can obtain the proof of part (b) by replacing all spherical functions with their hyperbolic counterparts.
	
	Let $T$ be a spherical triangle with side lengths $a,b,c$. By Lemma \ref{lemma:exist} there exists a Euclidean triangle $T'$ with side lengths $a'=2s(a)$, $b'=2s(b)$ and $c'=2s(c)$. By (\ref{form:R}), the circumradius of $T'$ is
	\begin{align*}
		R'
		&=\frac{2s(a)s(b)s(c)}{\sqrt{(s(a)+s(b)-s(c))(s(a)+s(c)-s(b))(s(b)+s(c)-s(a))(s(a)+s(b)+s(c))}}\\
		&=\frac{2s(a)s(b)s(c)}{\Jbar\cdot\sqrt{s(a)+s(b)+s(c)}}.
	\end{align*}
	Similarly, its inradius is 
	\[
	r'=\frac{\Jbar}{\sqrt{s(a)+s(b)+s(c)}}.
	\]
	Since $T'$ is Euclidean, we know
	\begin{equation}
		\label{i0}
		f(R',r')\geq g(a',b',c').
	\end{equation}
	Substituting the expressions for $a'$, $b'$, $c'$, $R'$, and $r'$, we have
	\[
	f\left(\frac{2s(a)s(b)s(c)}{\Jbar\cdot\sqrt{s(a)+s(b)+s(c)}},
	\frac{\Jbar}{\sqrt{s(a)+s(b)+s(c)}}\right)\geq g(2s(a),2s(b),2s(c)).
	\]
	Since $f$ and $g$ are homogeneous of degree $n$ we can write this as
	\begin{equation}\label{i2}
		f\left(\frac{2s(a)s(b)s(c)}{\Jbar},\Jbar\right)\cdot\left(\frac{1}{s(a)+s(b)+s(c)}\right)^{n/2}\geq 2^n\cdot g(s(a),s(b),s(c)).
	\end{equation}
	
	The condition that $f\left(\frac{2M^2}{x},x\right)$ is a decreasing function of $x$ on $(0,M]$ implies $f\left(\frac{2M^2}{x_1},x_1\right)\geq f\left(\frac{2M^2}{x_2},x_2\right)$ for all $x_1,x_2$ satisfying $0<x_1\leq x_2\leq M$. By Lemma \ref{lemma:JJ}, we have $0<J\leq \Jbar\leq\sqrt{s(a)s(b)s(c)}$ for spherical triangles. Taking $x_1=J$, $x_2=\Jbar$ and $M=\sqrt{s(a)s(b)s(c)}$ yields
	\begin{equation}\label{i1}
		f\left(\frac{2s(a)s(b)s(c)}{J},J\right)\geq f\left(\frac{2s(a)s(b)s(c)}{\Jbar},\Jbar\right).
	\end{equation}
	
	Combining (\ref{i2}) and (\ref{i1}), we have
	\[
	f\left(\frac{2s(a)s(b)s(c)}{J},J\right)\cdot\left(\frac{1}{s(a)+s(b)+s(c)}\right)^{n/2}\geq 2^n\cdot g(s(a),s(b),s(c)).
	\]
	Now we multiply both sides by the quantity $\left(\frac{s(a)+s(b)+s(c)}{s(a+b+c)}\right)^{n/2}$ to get the desired right hand side of
	\[
	2^n\cdot g(s(a),s(b),s(c))\cdot \left(\frac{s(a)+s(b)+s(c)}{s(a+b+c)}\right)^{n/2}
	\]
	while the left hand side becomes
	\[
	f\left(\frac{2s(a)s(b)s(c)}{J},J\right)\cdot\left(\frac{1}{s(a+b+c)}\right)^{n/2}.
	\]
	Since $f$ is homogeneous we can rewrite the left hand side:
	\begin{align*}
		f\left(\frac{2s(a)s(b)s(c)}{J},J\right)\cdot\left(\frac{1}{s(a+b+c)}\right)^{n/2}&=f\left(\frac{2s(a)s(b)s(c)}{J\cdot \sqrt{s(a+b+c)}},\frac{J}{\sqrt{s(a+b+c)}}\right)\\
		&=f(\tan R,\tan r)
	\end{align*}
	by formulas (\ref{form:RJ}) and (\ref{form:rJ}). So, we have
	\[
	f(\tan R,\tan r)\geq2^n\cdot g(s(a),s(b),s(c))\cdot \left(\frac{s(a)+s(b)+s(c)}{s(a+b+c)}\right)^{n/2}
	\]
	as desired.
	
	
	We only need to show that equality is achieved if and only if $a=b=c$. Note that equality is achieved if and only if there is equality in both (\ref{i2}) and (\ref{i1}). Since (\ref{i2}) is equivalent to (\ref{i0}), there is equality if and only if $a'=b'=c'$ which is true if and only if $a=b=c$. On the other hand, there is equality in (\ref{i1}) if and only if $J=\Jbar$, which occurs if and only if $a=b=c$ by Lemma \ref{lemma:JJ}.
	
\end{proof}

The following corollary immediately generalizes (\ref{ineq:strong0}) and (\ref{ineq:1}) to spherical geometry.

\begin{corollary}\label{thm:s2Rr}
	
	Let $\frac{R}{r}\geq g(a,b,c)$ be an inequality which holds for all Euclidean triangles with circumradius $R$, inradius $r$, and side lengths $a,b,c$. Then for all spherical triangles,
	\[
	\frac{\tan R}{\tan r}\geq g(s(a),s(b),s(c)).
	\]
	
\end{corollary}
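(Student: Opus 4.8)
The plan is to specialize the construction in the proof of Theorem \ref{thm:main} to the function $f(x,y)=x/y$, which is homogeneous of degree $n=0$. With $n=0$ the correction factor $\left(\frac{s(a)+s(b)+s(c)}{s(a+b+c)}\right)^{n/2}$ and the constant $2^n$ both collapse to $1$, so the conclusion of part (a) reads exactly $\frac{\tan R}{\tan r}\geq g(s(a),s(b),s(c))$, and the hypothesis of part (a) is met since $f\!\left(\frac{2M^2}{x},x\right)=\frac{2M^2}{x^2}$ is decreasing on $(0,M]$. Rather than quote the theorem verbatim---its statement carries an equality hypothesis that the corollary does not assume---I would reproduce the short chain of inequalities from its proof for this specific $f$, which also keeps the argument self-contained.

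First I would invoke Lemma \ref{lemma:exist} to produce a Euclidean triangle $T'$ whose side lengths are exactly $s(a)$, $s(b)$, $s(c)$. Computing its circumradius and inradius from the standard Euclidean (Heron) formulas---equivalently, by specializing (\ref{form:R}) and (\ref{form:r})---gives
\[
\frac{R'}{r'}=\frac{2s(a)s(b)s(c)}{\Jbar^2}.
\]
Since the hypothesis $\frac{R}{r}\geq g(a,b,c)$ holds for every Euclidean triangle, applying it to $T'$ yields $\frac{2s(a)s(b)s(c)}{\Jbar^2}\geq g(s(a),s(b),s(c))$. Choosing the auxiliary triangle with sides $s(a),s(b),s(c)$ rather than $2s(a),2s(b),2s(c)$ makes the argument of $g$ already match the desired right-hand side, so no homogeneity assumption on $g$ is needed at this step.

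Next I would rewrite the spherical ratio using (\ref{form:RJ}) and (\ref{form:rJ}), which combine to give
\[
\frac{\tan R}{\tan r}=\frac{2s(a)s(b)s(c)}{J^2}.
\]
The entire problem now reduces to comparing $J^2$ with $\Jbar^2$. By Lemma \ref{lemma:JJ}, in spherical geometry $J\leq\Jbar$, hence $J^2\leq\Jbar^2$, and therefore
\[
\frac{\tan R}{\tan r}=\frac{2s(a)s(b)s(c)}{J^2}\geq\frac{2s(a)s(b)s(c)}{\Jbar^2}\geq g(s(a),s(b),s(c)),
\]
which is the claim.

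The main obstacle---indeed essentially the whole content---is recognizing that both $\frac{\tan R}{\tan r}$ and the Euclidean $\frac{R'}{r'}$ reduce to the common numerator $2s(a)s(b)s(c)$ divided by $J^2$ and $\Jbar^2$ respectively, so that the comparison turns solely on the sign of $\Jbar-J$. The direction $J\leq\Jbar$ supplied by Lemma \ref{lemma:JJ} is precisely what makes the spherical case succeed; the reversed inequality $\Jbar\leq J$ in hyperbolic geometry is exactly why one should not expect an analogous hyperbolic corollary, consistent with the failure noted earlier for (\ref{ineq:strong0}).
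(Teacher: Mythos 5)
Your proof is correct, and at its core it runs along the same track as the paper's, but the packaging differs in a way worth noting. The paper disposes of this corollary in a few lines: it sets $f(x,y)=x/y$, observes $f\left(\frac{2M^2}{x},x\right)=\frac{2M^2}{x^2}$ is decreasing, and cites Theorem \ref{thm:main}(a) with $n=0$. You instead inline the theorem's proof specialized to this $f$, using the same ingredients (Lemma \ref{lemma:exist}, the Euclidean hypothesis applied to an auxiliary triangle, Lemma \ref{lemma:JJ}, and formulas (\ref{form:RJ})--(\ref{form:rJ})), and your computations $\frac{\tan R}{\tan r}=\frac{2s(a)s(b)s(c)}{J^2}$ and $\frac{R'}{r'}=\frac{2s(a)s(b)s(c)}{\Jbar^2}$ both check out. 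What your self-contained version buys is genuine: first, by taking the auxiliary triangle with sides exactly $s(a),s(b),s(c)$ rather than $2s(a),2s(b),2s(c)$, you never invoke homogeneity of $g$ --- and indeed the corollary's statement places no homogeneity hypothesis on $g$, whereas the paper's proof, in asserting that $g$ is homogeneous of degree $n=0$, tacitly assumes something not granted (for the ratio-type $g$'s actually used, such as in (\ref{ineq:strong0}) and (\ref{ineq:1}), this is harmless, but your route covers arbitrary $g$). Second, Theorem \ref{thm:main} as stated hypothesizes equality if and only if $a=b=c$ in the Euclidean inequality, which the corollary does not assume; you correctly flag this mismatch, and your inlined argument avoids it since the equality clause plays no role in the inequality portion of the theorem's proof. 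Your closing remark --- that the reversal $\Jbar\leq J$ in hyperbolic geometry is exactly the obstruction to a hyperbolic analogue --- is also consistent with the failure the paper notes for (\ref{ineq:strong0}).
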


\begin{proof}
	Let $f(x,y)=x/y$. Then we have
	\[
	f(R,r)\geq g(a,b,c)
	\]
	with $f$ and $g$ homogeneous functions of degree $n=0$. Now since
	\[
	f\left(\frac{2M^2}{x},x\right)=\frac{2M^2}{x^2}
	\]
	is a decreasing function of $x$ for $x>0$, part (a) of Theorem \ref{thm:main} yields
	\[
	f(\tan R,\tan r)\geq g(s(a),s(b),s(c))
	\]
	for all spherical triangles, as desired.
\end{proof}

\subsection{Examples}

To further illustrate the use of Theorem \ref{thm:main} we apply it to (\ref{ineq:2}) and (\ref{ineq:3}). These two statements give rise to four separate inequalities relating $R$ and $r$ to the side lengths $a,b,c$ for which we can find an analogue in either spherical or hyperbolic geometry.

\begin{proposition}[Generalization of (\ref{ineq:2}) (left inequality) to spherical geometry]
	All spherical triangles satisfy
	\begin{equation}
		2\tan^2 R+\tan^2 r\geq \left(s(a)^2+s(b)^2+s(c)^2\right)\left(\frac{s(a)+s(b)+s(c)}{s(a+b+c)}\right)
	\end{equation}
	with equality if and only if $a=b=c$.
\end{proposition}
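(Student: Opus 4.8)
The plan is to apply Theorem~\ref{thm:main} directly, so the entire task reduces to identifying the correct $f$ and $g$ and verifying the hypotheses of part~(a). The left inequality of (\ref{ineq:2}) reads $2R^2+r^2\geq\frac14(a^2+b^2+c^2)$, so I would set $f(x,y)=2x^2+y^2$ and $g(a,b,c)=\frac14(a^2+b^2+c^2)$. Both are homogeneous of degree $n=2$, and the Euclidean inequality $f(R,r)\geq g(a,b,c)$ holds with equality exactly when $a=b=c$ (this is the content of (\ref{ineq:2}), whose equilateral equality case is standard). With these choices the conclusion of Theorem~\ref{thm:main}(a) becomes
\[
2\tan^2 R+\tan^2 r\geq 2^2\cdot\tfrac14\bigl(s(a)^2+s(b)^2+s(c)^2\bigr)\cdot\frac{s(a)+s(b)+s(c)}{s(a+b+c)},
\]
and since $2^2\cdot\frac14=1$, this is exactly the stated proposition, with equality if and only if $a=b=c$.

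The one hypothesis that must be checked is the monotonicity condition in part~(a): that $f\bigl(\frac{2M^2}{x},x\bigr)$ is a decreasing function of $x$ on $(0,M]$. Substituting gives
\[
f\!\left(\frac{2M^2}{x},x\right)=2\cdot\frac{4M^4}{x^2}+x^2=\frac{8M^4}{x^2}+x^2,
\]
whose derivative in $x$ is $-\frac{16M^4}{x^3}+2x$. This is negative precisely when $x^4<8M^4$, i.e.\ $x<8^{1/4}M$. Since $8^{1/4}>1$ and we only need the inequality on $(0,M]$, the function is indeed decreasing on that interval, so the hypothesis of part~(a) is satisfied. This derivative computation is the only genuine verification required and is the main (though mild) obstacle; it is worth confirming the threshold $8^{1/4}M$ comfortably exceeds $M$ so that monotonicity holds throughout $(0,M]$.

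Accordingly, the proof I would write is short: declare $f(x,y)=2x^2+y^2$ and $g(a,b,c)=\frac14(a^2+b^2+c^2)$, note degree $n=2$ and that the Euclidean inequality (\ref{ineq:2}) supplies $f(R,r)\geq g(a,b,c)$ with equilateral equality, verify the decreasing-on-$(0,M]$ condition via the derivative sign above, and invoke Theorem~\ref{thm:main}(a) to obtain the displayed inequality together with its equality characterization. No further estimation is needed, since Theorem~\ref{thm:main} already packages Lemma~\ref{lemma:JJ} and the $J,\Jbar$ machinery that would otherwise constitute the hard part.
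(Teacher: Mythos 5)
Your proposal is correct and matches the paper's proof essentially verbatim: the same choices $f(x,y)=2x^2+y^2$ and $g(x,y,z)=\frac{1}{4}(x^2+y^2+z^2)$ of degree $n=2$, the same derivative computation $\frac{d}{dx}\left[\frac{8M^4}{x^2}+x^2\right]=\frac{2x^4-16M^4}{x^3}<0$ on $(0,M]$, and the same invocation of Theorem~\ref{thm:main}(a). Your extra remark that the critical point $8^{1/4}M$ comfortably exceeds $M$ is a harmless refinement of the paper's sign check and changes nothing in the argument.
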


\begin{proof}
	The leftmost inequality in (\ref{ineq:2}) states that all Euclidean triangles satisfy
	\begin{equation}
		\label{ineq:2a}
		2R^2+r^2\geq\frac{1}{4}(a^2+b^2+c^2).
	\end{equation}
	Let $f(x,y)=2x^2+y^2$ and $g(x,y,z)=\frac{1}{4}(x^2+y^2+z^2)$. Note that $f$ and $g$ are homogeneous of degree 2, and (\ref{ineq:2a}) can be stated as
	\[
	f(R,r)\geq g(a,b,c).
	\]
	To prove the desired spherical analogue, we only need to confirm that
	\[
	f\left(\frac{2M^2}{x},x\right)=\frac{8M^4}{x^2}+x^2
	\]
	is a decreasing function of $x$ for $x\in(0,M]$. We take its derivative with respect to $x$:
	\begin{align*}
		\frac{d}{dx}\left[\frac{8M^4}{x^2}+x^2\right]&=-\frac{16M^4}{x^3}+2x\\
		&=\frac{2x^4-16M^4}{x^3}
	\end{align*}
	which is indeed negative for $0<x\leq M$. Theorem $\ref{thm:main}$ tells us that for spherical triangles,
	\[
	f(\tan R,\tan r)\geq 2^2\cdot g(s(a),s(b),s(c))\cdot\left(\frac{s(a)+s(b)+s(c)}{s(a+b+c)}\right)^{2/2}
	\]
	or
	\[
	2\tan^2 R+\tan^2 r\geq \left(s(a)^2+s(b)^2+s(c)^2\right)\left(\frac{s(a)+s(b)+s(c)}{s(a+b+c)}\right).
	\]
\end{proof}

\begin{proposition}[Generalization of (\ref{ineq:2}) (right inequality) to hyperbolic geometry.]
	All hyperbolic triangles satisfy
	\begin{equation}\label{ineq:2b}
		\left(s(a)^2+s(b)^2+s(c)^2\right)\left(\frac{s(a)+s(b)+s(c)}{s(a+b+c)}\right)
		\geq 3\tanh r(2\tanh R-\tanh r)
	\end{equation}
	with equality if and only if $a=b=c$.
\end{proposition}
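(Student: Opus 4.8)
The plan is to imitate the proof of Theorem \ref{thm:main} rather than cite it as a black box, because the right inequality of (\ref{ineq:2}), namely $\frac14(a^2+b^2+c^2)\ge 3r(2R-r)$, places the side-length expression on the \emph{larger} side, which is the reverse of the hypothesis $f(R,r)\ge g(a,b,c)$ demanded by the theorem. Accordingly I would set $f(x,y)=3y(2x-y)$ and $g(x,y,z)=\frac14(x^2+y^2+z^2)$, both homogeneous of degree $n=2$, so that the Euclidean inequality reads $g(a,b,c)\ge f(R,r)$, and then re-run the argument of part (b) with every inequality reversed.

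First I would apply Lemma \ref{lemma:exist} to produce the Euclidean triangle $T'$ with sides $2s(a),2s(b),2s(c)$, whose circumradius and inradius are $R'=\frac{2s(a)s(b)s(c)}{\Jbar\sqrt{s(a)+s(b)+s(c)}}$ and $r'=\frac{\Jbar}{\sqrt{s(a)+s(b)+s(c)}}$, exactly as computed in the proof of the theorem. The Euclidean fact $g(a',b',c')\ge f(R',r')$ together with homogeneity of degree $2$ then gives
\[
4\,g(s(a),s(b),s(c))\ge \frac{1}{s(a)+s(b)+s(c)}\,f\!\left(\frac{2s(a)s(b)s(c)}{\Jbar},\Jbar\right).
\]

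The key step, and the place where the reversed direction matters, is replacing $\Jbar$ by $J$. In hyperbolic geometry Lemma \ref{lemma:JJ} gives $\Jbar\le J\le\sqrt{s(a)s(b)s(c)}=:M$. Since the $f$-term now sits on the \emph{smaller} side of the inequality, I need $f\!\left(\frac{2M^2}{\Jbar},\Jbar\right)\ge f\!\left(\frac{2M^2}{J},J\right)$, which, because $\Jbar\le J$, requires $f\!\left(\frac{2M^2}{x},x\right)$ to be \emph{decreasing} on $(0,M]$ — the opposite of the increasing condition in part (b). This reversal of the monotonicity requirement is the main subtlety to get right. Here $f\!\left(\frac{2M^2}{x},x\right)=12M^2-3x^2$ has derivative $-6x<0$, so the required monotonicity indeed holds, and this proposition is the natural ``reversed'' companion of Theorem \ref{thm:main}(b).

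Finally, chaining the two displayed inequalities (now with $J$ in place of $\Jbar$), multiplying both sides by $\frac{s(a)+s(b)+s(c)}{s(a+b+c)}$, and rewriting the $f$-term through formulas (\ref{form:RJ}) and (\ref{form:rJ}) as $f(\tanh R,\tanh r)=3\tanh r(2\tanh R-\tanh r)$ yields the claimed inequality (\ref{ineq:2b}). Equality propagates just as in the theorem: it forces equality in the Euclidean inequality, hence $a'=b'=c'$ and so $a=b=c$, and equality in the monotonicity step, hence $J=\Jbar$, again giving $a=b=c$ by Lemma \ref{lemma:JJ}. Thus equality holds if and only if the triangle is equilateral.
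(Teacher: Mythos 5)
Your argument is correct step by step: the computation of $R'$ and $r'$ via Lemma \ref{lemma:exist}, the homogeneity reduction, the use of $0<\Jbar\le J\le\sqrt{s(a)s(b)s(c)}$ from Lemma \ref{lemma:JJ} together with the fact that $f\left(\frac{2M^2}{x},x\right)=12M^2-3x^2$ is strictly decreasing, the final rewriting through (\ref{form:RJ}) and (\ref{form:rJ}), and the equality analysis all check out. However, your premise --- that Theorem \ref{thm:main} cannot be cited as a black box because the side-length expression sits on the larger side --- is mistaken, and this is exactly where you diverge from the paper. The paper simply negates both functions: it takes $f(x,y)=-3y(2x-y)$ and $g(x,y,z)=-\frac{1}{4}(x^2+y^2+z^2)$, which are still homogeneous of degree $2$; the Euclidean inequality then reads $f(R,r)\ge g(a,b,c)$ as the theorem requires, and your decreasing function reappears with its sign flipped as $f\left(\frac{2M^2}{x},x\right)=3x^2-12M^2$, which is increasing --- precisely the hypothesis of part (b) --- after which the conclusion rearranges to (\ref{ineq:2b}). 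Your reversed re-run of the proof is mathematically identical to this, since negating $f$ exactly interchanges the increasing and decreasing conditions, so the two routes prove the same inequality by the same mechanism; the paper's version is a three-line application of the theorem that keeps Theorem \ref{thm:main} as the single point of truth, while yours duplicates the theorem's proof but makes the direction-of-inequality bookkeeping explicit and in effect establishes a stand-alone ``reversed'' companion of part (b) (Euclidean hypothesis $g(a,b,c)\ge f(R,r)$ with $f\left(\frac{2M^2}{x},x\right)$ decreasing), which could be stated once and reused. It is worth internalizing the negation trick: whenever the Euclidean inequality points the wrong way, replacing $(f,g)$ by $(-f,-g)$ preserves homogeneity and swaps the monotonicity condition, so the theorem as stated already covers both directions.
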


\begin{proof}
	The rightmost inequality in (\ref{ineq:2}) states that all Euclidean triangles satisfy
	\[
	\frac{1}{4}(a^2+b^2+c^2)\geq 3r(2R-r).
	\]
	Let $f(x,y)=-3y(2x-y)$ and $g(x,y,z)=-\frac{1}{4}(x^2+y^2+z^2)$. Then $f$ and $g$ are homogeneous of degree 2 and
	\[
	f(R,r)\geq g(a,b,c)
	\]
	holds for all Euclidean triangles. By part (b) of Theorem \ref{thm:main}, since
	\[
	f\left(\frac{2M^2}{x},x\right)=-3x\left(\frac{4M^2}{x}-x\right)=3x^2-12M^2
	\]
	is an increasing function of $x$ for all $x>0$, the inequality
	\[
	f(\tanh R,\tanh r)\geq 2^2\cdot g(s(a),s(b),s(c))\cdot\left(\frac{s(a)+s(b)+s(c)}{s(a+b+c)}\right)^{2/2}
	\]
	which is equivalent to
	\[
	\left(s(a)^2+s(b)^2+s(c)^2\right)\left(\frac{s(a)+s(b)+s(c)}{s(a+b+c)}\right)
	\geq 3\tanh r(2\tanh R-\tanh r)
	\]
	holds for all hyperbolic triangles.
\end{proof}

Theorem \ref{thm:main} also proves the following two propositions, generalizing (\ref{ineq:3}).

\begin{proposition}[Generalization of (\ref{ineq:3}) (left inequality) to spherical geometry]\label{ineq:3a}
	All spherical triangles satisfy
	\begin{equation}
		\frac{1}{\tan^2r}\geq\left(\frac{1}{s(a)^2}+\frac{1}{s(b)^2}+\frac{1}{s(c)^2}\right)\left(\frac{s(a)+s(b)+s(c)}{s(a+b+c)}\right)^{-1}
	\end{equation}
	with equality if and only if $a=b=c$.
\end{proposition}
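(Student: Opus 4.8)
The plan is to obtain this proposition as a direct application of part (a) of Theorem \ref{thm:main} to the left inequality in (\ref{ineq:3}), which asserts that every Euclidean triangle satisfies
\[
\frac{1}{4r^2}\geq\frac{1}{a^2}+\frac{1}{b^2}+\frac{1}{c^2}.
\]
First I would package this in the form $f(R,r)\geq g(a,b,c)$ by setting $f(x,y)=\tfrac{1}{4y^2}$ and $g(x,y,z)=\tfrac{1}{x^2}+\tfrac{1}{y^2}+\tfrac{1}{z^2}$. Both are homogeneous of degree $n=-2$; the only mildly unusual point is that $f$ does not actually depend on its first argument (the circumradius), but this is harmless since the hypotheses of the theorem only constrain the combined expression $f\!\left(\tfrac{2M^2}{x},x\right)$.

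Next I would check the monotonicity hypothesis for the spherical case. Since $f$ ignores its first slot,
\[
f\!\left(\frac{2M^2}{x},x\right)=\frac{1}{4x^2},
\]
which is plainly a decreasing function of $x$ on $(0,M]$. Thus part (a) of Theorem \ref{thm:main} applies and yields
\[
f(\tan R,\tan r)\geq 2^{n}\cdot g(s(a),s(b),s(c))\cdot\left(\frac{s(a)+s(b)+s(c)}{s(a+b+c)}\right)^{n/2}
\]
with equality exactly when $a=b=c$.

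Finally I would specialize to $n=-2$. Here $2^{n}=\tfrac14$ and the exponent $n/2=-1$, so the displayed inequality reads
\[
\frac{1}{4\tan^2 r}\geq\frac14\left(\frac{1}{s(a)^2}+\frac{1}{s(b)^2}+\frac{1}{s(c)^2}\right)\left(\frac{s(a)+s(b)+s(c)}{s(a+b+c)}\right)^{-1},
\]
and multiplying through by $4$ gives precisely the claimed bound, with the equality condition $a=b=c$ inherited from the theorem. I do not anticipate a genuine obstacle in this argument; the only points demanding care are confirming that a function independent of the circumradius still legitimately meets the framework of Theorem \ref{thm:main}, and keeping the bookkeeping straight for the negative degree $n=-2$ when evaluating the factors $2^{n}$ and $(\cdot)^{n/2}$.
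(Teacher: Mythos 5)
Your proposal is correct and is exactly the argument the paper intends: the paper gives no written proof of Proposition \ref{ineq:3a}, stating only that Theorem \ref{thm:main} proves it, and your instantiation $f(x,y)=\frac{1}{4y^2}$, $g(x,y,z)=\frac{1}{x^2}+\frac{1}{y^2}+\frac{1}{z^2}$ with $n=-2$ and the decreasing-function check $f\left(\frac{2M^2}{x},x\right)=\frac{1}{4x^2}$ mirrors the pattern of the paper's worked examples for (\ref{ineq:2}). Your bookkeeping for the negative degree ($2^{-2}=\frac14$, exponent $n/2=-1$) and the observation that $f$'s independence of its first argument is harmless are both sound.
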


\begin{proposition}[Generalization of (\ref{ineq:3}) (right inequality) to spherical and hyperbolic geometry]\label{ineq:3b}
	\leavevmode
	
	\begin{itemize}
		\item[(a)] All spherical triangles satisfy
		\begin{equation}
			\frac{1}{3}\left(\frac{1}{s(a)}+\frac{1}{s(b)}+\frac{1}{s(c)}\right)^2\left(\frac{s(a)+s(b)+s(c)}{s(a+b+c)}\right)^{-1}\geq\frac{2}{\tan r\tan R}
		\end{equation}
		with equality if and only if $a=b=c$.
		
		\item[(b)] All hyperbolic triangles satisfy
		\begin{equation}
			\frac{1}{3}\left(\frac{1}{s(a)}+\frac{1}{s(b)}+\frac{1}{s(c)}\right)^2\left(\frac{s(a)+s(b)+s(c)}{s(a+b+c)}\right)^{-1}\geq\frac{2}{\tanh r\tanh R}
		\end{equation}
		with equality if and only if $a=b=c$.
	\end{itemize}
\end{proposition}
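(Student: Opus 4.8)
The plan is to obtain both parts from a single application of Theorem~\ref{thm:main}. Starting from the right-hand inequality of (\ref{ineq:3}),
\[
\frac{1}{3}\left(\frac{1}{a}+\frac{1}{b}+\frac{1}{c}\right)^2\geq\frac{1}{2rR},
\]
I would put it in the hypothesis form $f(R,r)\geq g(a,b,c)$ by negating both sides, taking $f(x,y)=-\frac{1}{2xy}$ and $g(x,y,z)=-\frac{1}{3}\left(\frac{1}{x}+\frac{1}{y}+\frac{1}{z}\right)^2$. Both functions are homogeneous of degree $n=-2$, and the Euclidean hypothesis holds with equality exactly for equilateral triangles.

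The key observation --- and the reason the same bound holds in both spherical and hyperbolic geometry --- is that
\[
f\!\left(\frac{2M^2}{x},x\right)=-\frac{1}{2\cdot\frac{2M^2}{x}\cdot x}=-\frac{1}{4M^2}
\]
is constant in $x$. A constant function is simultaneously (weakly) decreasing and (weakly) increasing on $(0,M]$, so the monotonicity hypotheses of both part (a) and part (b) of Theorem~\ref{thm:main} are satisfied. Each part then yields
\[
f(\tan R,\tan r)\geq 2^{-2}\,g(s(a),s(b),s(c))\left(\frac{s(a)+s(b)+s(c)}{s(a+b+c)}\right)^{-1}
\]
in the spherical case, and the analogue with $\tanh$ replacing $\tan$ in the hyperbolic case. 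Multiplying both sides by $-4$ --- which reverses the inequality and at the same time clears the negation in $f$ and $g$ and the factor $2^{-2}$ --- produces exactly the two stated inequalities.

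The step I expect to be the main obstacle is the equality clause, because the mechanism Theorem~\ref{thm:main} uses to force equality degenerates here. In the theorem's proof the conclusion holds with equality precisely when both (\ref{i2}) and (\ref{i1}) do, and equality in (\ref{i1}) is ordinarily equivalent to $J=\Jbar$. In the present case, however, $f\left(\frac{2s(a)s(b)s(c)}{J},J\right)$ and $f\left(\frac{2s(a)s(b)s(c)}{\Jbar},\Jbar\right)$ both equal $-\frac{1}{4s(a)s(b)s(c)}$, so (\ref{i1}) is an identity for every triangle and imposes no constraint. I would therefore argue the equality condition directly: since (\ref{i1}) always holds with equality, equality in the conclusion is equivalent to equality in (\ref{i2}), hence in the underlying Euclidean inequality (\ref{i0}), which occurs if and only if $a'=b'=c'$, i.e.\ if and only if $a=b=c$. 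This recovers the claimed ``equality iff equilateral'' statement in both geometries.
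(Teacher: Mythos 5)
Your proposal is correct and is essentially the paper's own argument: the paper states this proposition as a direct application of Theorem~\ref{thm:main}, and your instantiation $f(x,y)=-\frac{1}{2xy}$, $g(x,y,z)=-\frac{1}{3}\left(\frac{1}{x}+\frac{1}{y}+\frac{1}{z}\right)^2$ with $n=-2$, where $f\left(\frac{2M^2}{x},x\right)=-\frac{1}{4M^2}$ is constant and hence satisfies the monotonicity hypotheses of both parts (a) and (b) simultaneously, is exactly the intended route to getting the spherical and hyperbolic cases from one setup. Your handling of the equality clause is a welcome refinement rather than a deviation: you correctly observe that with constant $f$ the inequality (\ref{i1}) becomes an identity, so the phrase in the theorem's proof tying equality in (\ref{i1}) to $J=\Jbar$ does not apply here, and tracing equality through (\ref{i2}) and the Euclidean case (\ref{i0}) alone, as you do, is the right way to recover the ``equality iff $a=b=c$'' claim.
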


\section{Euler's Inequality in $n$-Dimensional Space}

We now turn to higher-dimensional space, namely $n$-dimensional spherical space, modeled by the unit sphere in $\mathbb{R}^{n+1}$. We focus on the generalization of the known inequality
\begin{equation}\label{eq:Rnr}
	R\geq nr
\end{equation}
for an $n$-dimensional Euclidean simplex with circumradius $R$ and inradius $r$.


\begin{theorem}[Extension of Corollary \ref{thm:s2Rr}]\label{thm:snRr}
	
	Let $\frac{R}{r}\geq f(\{d_{ij}\})$ be an inequality which holds for an $n$-dimensional Euclidean simplex (where $R$ is the circumradius, $r$ is the inradius, and $\{d_{ij}\}$ is the set of edge lengths). Then $\frac{\tan R}{\tan r}\geq f(\{s(d_{ij})\})$ holds for an $n$-dimensional spherical simplex.
	
\end{theorem}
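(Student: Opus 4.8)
The plan is to mirror the proof of Corollary~\ref{thm:s2Rr}, replacing the planar projection of Lemma~\ref{lemma:exist} with its higher-dimensional analogue and replacing the triangle-specific comparison $J\le\Jbar$ of Lemma~\ref{lemma:JJ} with a direct geometric comparison of inradii. First I would extend Lemma~\ref{lemma:exist} to simplices: given a spherical $n$-simplex $S$ on the unit sphere in $\mathbb{R}^{n+1}$, place its circumcenter at the pole $N=(0,\dots,0,1)$, so that every vertex $v_i$ satisfies $v_i\cdot N=\cos R$, and apply the gnomonic (central) projection $\pi(x)=x/x_{n+1}$ onto the tangent hyperplane $H=\{x_{n+1}=1\}$. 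The same chord computation as in Lemma~\ref{lemma:exist} shows that the image $\tilde T=\pi(S)$ is a Euclidean simplex with edge lengths $\tfrac{2}{\cos R}\,s(d_{ij})$. Two features of this projection do the work: since $\pi(v_i)-N$ is horizontal of Euclidean norm $\tan R$ for every $i$, the point $N$ is equidistant from all vertices of $\tilde T$, so its circumradius is exactly $\tilde R=\tan R$; and since $\pi$ carries great subspheres to hyperplanes, it maps the faces of $S$ to the faces of $\tilde T$.

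Next I would record the reduction. Let $T'$ be the Euclidean simplex with edge lengths exactly $s(d_{ij})$ (which exists, being similar to $\tilde T$), and let $R',r'$ be its circumradius and inradius. Applying the Euclidean hypothesis to $T'$ gives $R'/r'\ge f(\{s(d_{ij})\})$, and since $R/r$ is scale invariant, $R'/r'=\tilde R/\tilde r=\tan R/\tilde r$, where $\tilde r$ is the inradius of $\tilde T$. Thus the theorem follows once I establish the key inequality
\[
\tilde r\ \ge\ \tan r,
\]
the $n$-dimensional spherical analogue of the bound $J\le\Jbar$ in Lemma~\ref{lemma:JJ}; indeed, combining it with the above yields $\tfrac{\tan R}{\tan r}\ge\tfrac{\tan R}{\tilde r}=\tfrac{R'}{r'}\ge f(\{s(d_{ij})\})$.

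To prove $\tilde r\ge\tan r$ I would argue geometrically rather than by the explicit trigonometry used for triangles. Let $C\subseteq S$ be the inscribed ball of $S$, i.e.\ the spherical cap of angular radius $r$ about the incenter $I$. Because $\pi$ is a bijection of the open hemisphere preserving inclusion, $\pi(C)\subseteq\pi(S)=\tilde T$, and $\pi(C)$ is a solid ellipsoid (the central image of a round cap). It therefore suffices to show that this ellipsoid contains a Euclidean ball of radius $\tan r$, for then so does $\tilde T$ and $\tilde r\ge\tan r$. Writing $\theta$ for the spherical distance from $I$ to the circumcenter $N$ and using the rotational symmetry of the configuration about the plane $\mathrm{span}(N,I)$, the ellipsoid $\pi(C)$ has one distinguished semi-axis and $n-1$ equal transverse semi-axes, which a direct computation evaluates as
\[
a=\frac{\sin r\cos r}{\cos^2\theta-\sin^2 r}
\qquad\text{and}\qquad
b=\frac{\sin r}{\sqrt{\cos^2 r\cos^2\theta-\sin^2 r\sin^2\theta}}.
\]
Both reduce to $\tan r$ when $\theta=0$ and exceed it otherwise, since $a\ge\tan r\iff\cos^2\theta\le1$ and $b\ge\tan r\iff\sin^2\theta\ge0$. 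Hence every semi-axis of $\pi(C)$ is at least $\tan r$, so $\pi(C)$ (and therefore $\tilde T$) contains a ball of radius $\tan r$, proving $\tilde r\ge\tan r$.

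The main obstacle is precisely this last step: unlike the circumradius, which the projection reproduces exactly as $\tan R$, the inradius is distorted by the gnomonic map whenever the incenter fails to coincide with the circumcenter, and one must control that distortion. The computation above shows the distortion only ever helps—the off-center cap projects to an ellipsoid no thinner than the central one—but verifying this cleanly, by identifying $\pi(C)$ as an ellipsoid of revolution and bounding its smallest semi-axis below by $\tan r$, is the crux. The remainder is the same bookkeeping with scale-invariant ratios that proves Corollary~\ref{thm:s2Rr}.
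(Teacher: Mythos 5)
Your proposal is correct, and its skeleton is exactly the paper's: gnomonic projection from the circumcenter onto the tangent hyperplane $x_{n+1}=1$, the observation that the circumradius projects exactly to $\tan R$ while the insphere projects to an inscribed ellipsoid of revolution, the similarity/scale-invariance bookkeeping reducing everything to the single inequality $\tilde r\geq\tan r$ for the inradius $\tilde r$ of the projected simplex. The two arguments part ways only at that crux. The paper settles it synthetically, with no axis computation at all: taking $O$ the origin, $I$ the projected incenter, and $Z$ the point where the ray from $I$ in the $x_2$-direction meets the ellipsoid, the triangle $OZI$ has a right angle at $I$ (the $x_2$-coordinate of $I$ vanishes), so $\tan r=IZ/IO\leq IZ\leq b\leq\tilde r$, using $IO\geq1$ (as $I$ lies in the hyperplane $x_{n+1}=1$) and the fact that $IZ$ is a half-chord parallel to the transverse semi-axis $b$. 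You instead compute both semi-axes of $\pi(C)$ explicitly and bound each below by $\tan r$; your formulas are correct, and in fact simplify, since $\cos^2 r\cos^2\theta-\sin^2 r\sin^2\theta=\cos^2\theta-\sin^2 r$, so that $b=\sin r/\sqrt{\cos^2\theta-\sin^2 r}$ and $a/b=\cos r/\sqrt{\cos^2\theta-\sin^2 r}\geq 1$. What your heavier computation buys is a point the paper only asserts: its final step ``a sphere of radius $b$ centered at the ellipsoid's center is contained in the ellipsoid'' requires $b$ to be the \emph{smallest} semi-axis, i.e.\ $a\geq b$, which the paper attributes to symmetry even though symmetry alone only yields the $n-1$ equal transverse axes; your computation proves that assertion. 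What the paper's route buys is economy: no parametrization of the projected cap and no semi-axis formulas, just one right triangle. Both arguments share the same implicit standing assumption that the simplex (hence the cap) lies in the open hemisphere about the circumcenter, so the projection is defined and $\pi(C)$ is a bounded ellipsoid.
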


\begin{figure}
	\begin{center}
		\includegraphics[width=0.35\columnwidth]{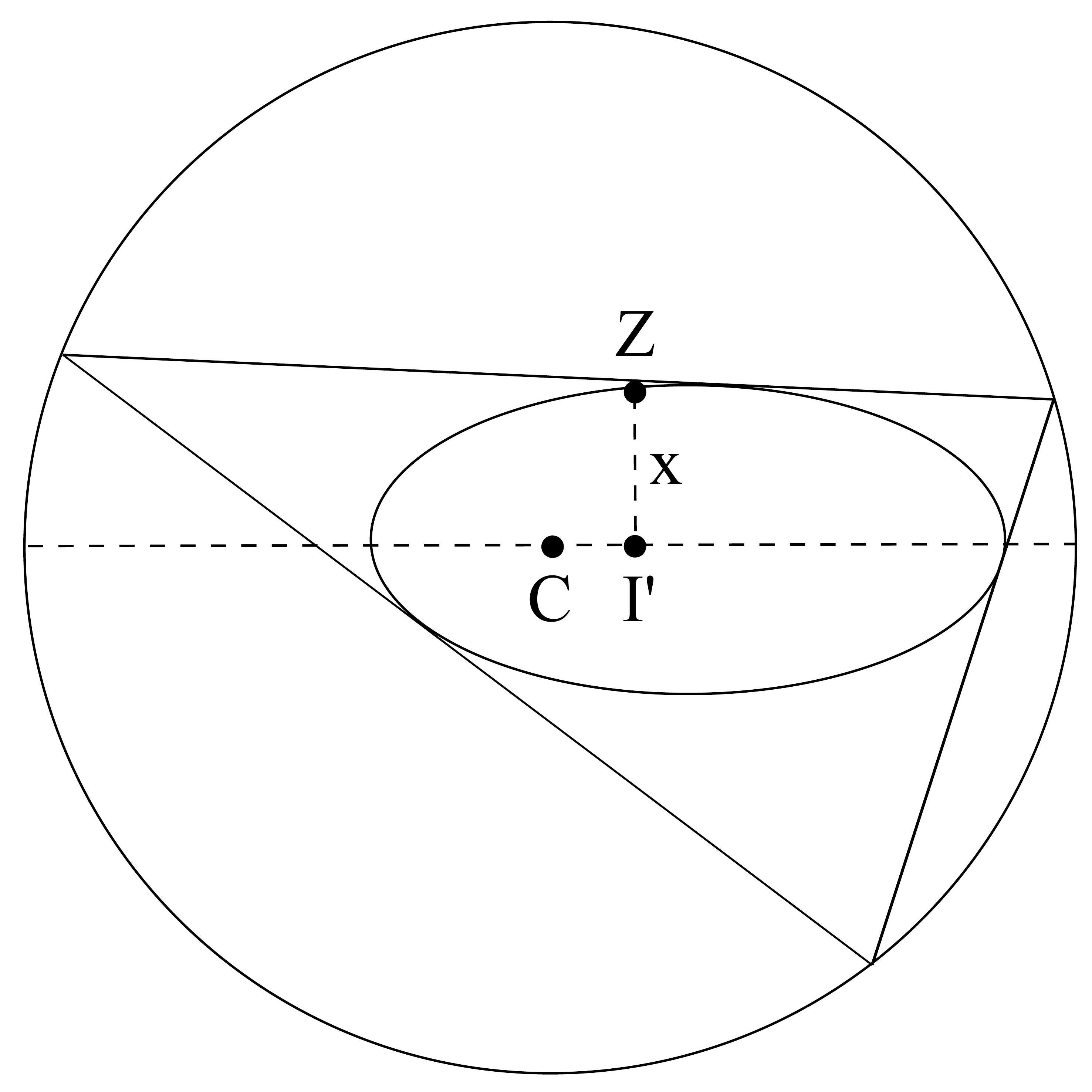}
		\caption{The projection of the simplex, circumsphere, and insphere onto the tangent plane $x_{n+1}=1$.}
		\label{fig:inellipse}
	\end{center}
\end{figure}

\begin{proof}
	Assume without loss of generality that a spherical $n$-simplex, lying on the unit sphere in $\mathbb{R}^{n+1}$, has its circumcenter at the north pole $C=(0,\dots,0,1)$. We may also assume the incenter lies on the two-dimensional $x_1x_{n+1}$ plane. By projecting the simplex onto the $n$-dimensional hyperplane tangent to the unit sphere at $C$ (with equation $x_{n+1}=1$) we obtain an $n$-dimensional Euclidean simplex with circumradius $\tan R$ and some inradius $r'$. 
	On the other hand, the image of its insphere under the same projection is an inscribed ellipsoid of the Euclidean simplex. By symmetry, this ellipsoid has the same radius $b$ in all directions except along the $x_1$-axis where it is longer. If $d_{ij}$ denotes the length of the spherical edge between vertices $P_i$ and $P_j$ then the corresponding Euclidean edge length is $k\cdot\sin\frac{d_{ij}}{2}=k\cdot s(d_{ij})$ where $k=2/\cos R$. By similarity, there exists a Euclidean simplex with side lengths $\{s(d_{ij})\}$ and the same circumradius-to-inradius ratio. Since the inequality is true for a Euclidean simplex we have
	\[
	\frac{\tan R}{r'}\geq f(\{s_s(d_{ij})\})
	\]
	and so it is sufficient to show
	\[
	\frac{\tan R}{\tan r}\geq\frac{\tan R}{r'}.
	\]
	or
	\[
	r'\geq\tan r.
	\]
	
	Let $O$ be the origin, $I$ the image of the spherical incenter and $Z$ the point where the ray starting at $I$ and heading in the $x_2$-direction intersects the inscribed ellipse (see Fig. \ref{fig:inellipse}). Observe that $OZI$ is a right triangle yielding $\tan r=\frac{IZ}{IO}$. It is clear that $IO\geq 1$ (as the $x_{n+1}$-coordinate of $I$ is equal to 1) so we have $\tan r\leq IZ$. But $IZ\leq b$, and $b\leq r'$ as a sphere of radius $b$ with the same center as the inscribed ellipse would be contained in the ellipse and also the simplex. Thus $\tan r\leq r'$.
\end{proof}

Theorem \ref{thm:snRr} immediately generalizes (\ref{eq:Rnr}) to spherical geometry.

\begin{theorem}
	
	An $n$-dimensional simplex in spherical geometry with circumradius $R$ and inradius $r$ satisfies
	\[
	\tan R\geq n\tan r.
	\]
	
\end{theorem}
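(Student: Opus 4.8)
The plan is to derive this statement as an immediate corollary of Theorem \ref{thm:snRr}; essentially no new argument is needed beyond supplying the correct Euclidean input. First I would invoke the known Euclidean Euler-type inequality (\ref{eq:Rnr}), namely $R \geq nr$ for an $n$-dimensional Euclidean simplex, and rewrite it in the ratio form $\frac{R}{r} \geq n$ so that it matches the hypothesis of Theorem \ref{thm:snRr}.

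The crucial point is that the right-hand side here is the \emph{constant} function $f(\{d_{ij}\}) = n$, which has no dependence on the edge lengths. Hence the substitution $d_{ij} \mapsto s(d_{ij})$ dictated by Theorem \ref{thm:snRr} leaves it untouched, that is, $f(\{s(d_{ij})\}) = n$. Applying Theorem \ref{thm:snRr} to the inequality $\frac{R}{r} \geq n$ then gives $\frac{\tan R}{\tan r} \geq n$ for every $n$-dimensional spherical simplex, and multiplying through by $\tan r > 0$ (which holds since $0 < r < \pi/2$) produces $\tan R \geq n \tan r$, which is exactly the claim.

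I do not anticipate a genuine obstacle at this stage, because all of the geometric substance has already been absorbed into the proof of Theorem \ref{thm:snRr}: the projection of the spherical simplex onto the tangent hyperplane at the circumcenter, the observation that the insphere projects to an inscribed ellipsoid, and the estimate $r' \geq \tan r$ coming from $IO \geq 1$. The only points worth double-checking are that the base inequality $R \geq nr$ is legitimately available as a Euclidean fact (it is cited as (\ref{eq:Rnr})) and that Theorem \ref{thm:snRr} imposes no nondegeneracy or homogeneity condition on $f$ that a constant function might fail to satisfy; inspection of its statement confirms that it places no such requirement on $f$, so the constant input is admissible.
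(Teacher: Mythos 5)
Your proposal is correct and follows exactly the paper's route: the paper likewise obtains this theorem as an immediate application of Theorem \ref{thm:snRr} to the Euclidean inequality (\ref{eq:Rnr}) with the constant function $f(\{d_{ij}\})=n$. Your extra checks (that a constant $f$ satisfies the hypotheses and that $\tan r>0$ justifies clearing denominators) are sound but routine, matching the paper's one-line derivation.
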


\section*{Acknowledgement}

This work was done during the 2017 REU program in mathematics at Oregon State University, with support by National Science Foundation Grant DMS-1359173, under the supervision of Professor Ren Guo.

\newpage

\vskip1cm

\noindent\textit{Karina Cho\\
	Harvey Mudd College}\\
\texttt{karinaecho@gmail.com}

\vskip0.5cm

\noindent\textit{Jacob Naranjo\\
	Kalamazoo College}\\
\texttt{jaconaranj57@gmail.com}

\end{document}